\newtheorem{theorem}{Theorem}
\theoremstyle{plain}
\newtheorem{lemma}{Lemma}
\newtheorem{proposition}{Proposition}
\newtheorem{remark}{Remark}
\numberwithin{equation}{section}
\begin{document}
\title[Generalized Ponce's inequality]{Generalized Ponce's inequality}
\author{Julio Mu\~{n}oz}
\address{Departamento de Matem\'{a}ticas. Universidad de Castilla-La Mancha,
Toledo, Spain}
\email{julio.munoz@ulcm.es}
\date{July 10, 2019}
\subjclass[2000]{Primary 05C38, 15A15; Secondary 05A15, 15A18}
\keywords{Nonlocal elliptic equations, Integral Equations}

\begin{abstract}
This note provides the generalization of a remarkable inequality by A. C.
Ponce whose consequences are essential in several fields, as
Characterization of Sobolev Spaces or Nonlocal Modelization.
\end{abstract}

\maketitle

\section{Definitions and preliminaries}

Let $\Omega $ be an open bounded set in $\mathbb{R}^{N}$. We define the
family of kernels $\left( k_{\delta }\right) _{\delta >0}$ as a set of
radial, positive functions fulfilling the following properties:

\begin{enumerate}
\item 
\begin{equation*}
\frac{1}{C_{N}}\int_{B\left( 0,\delta \right) }k_{\delta }\left( \left\vert
s\right\vert \right) ds=1
\end{equation*}%
where 
\begin{equation*}
C_{N}=\frac{1}{\limfunc{meas}\left( S^{N-1}\right) }\int_{S^{N-1}}\left\vert
\sigma \cdot \mathbf{e}\right\vert ^{p}d\mathcal{H}^{N-1}\left( \sigma
\right) ,
\end{equation*}%
$\mathcal{H}^{N-1}$ stands for the $\left( N-1\right) $-dimensional
Haussdorff measure on the unit sphere $S^{N-1}$ and $\mathbf{e}$ is any unit
vector in $\mathbb{R}^{N}$ and $p>1.$ $B(0,\delta )$ is the notation for the
ball of center $0$ and radius $\delta .$

\item $\limfunc{supp}k_{\delta }\subset B\left( 0,\delta \right) $.
\end{enumerate}

We define the nonlocal operator $B_{h}$ in $L^{p}\left( \Omega \right)
\times L^{p}\left( \Omega \right) $ by means of the formula%
\begin{equation*}
B_{h}\left( u,u\right) =\int_{\Omega }\int_{\Omega }H\left( x^{\prime
},x\right) \frac{k_{\delta }\left( \left\vert x^{\prime }-x\right\vert
\right) }{\left\vert x^{\prime }-x\right\vert ^{p}}\left\vert u\left(
x^{\prime }\right) -u\left( x\right) \right\vert ^{p}dx^{\prime }dx,
\end{equation*}%
where $H\left( x^{\prime },x\right) =\frac{h\left( x^{\prime }\right)
+h\left( x\right) }{2},$ $h\in \mathcal{H}$, 
\begin{equation*}
\mathcal{H}\doteq \left\{ h:\Omega \rightarrow \mathbb{R}\mid h\left(
x\right) \in \lbrack h_{\min },h_{\max }]\text{ a.e. }x\in \Omega ,\text{ }%
h=0\text{ in }\mathbb{R}^{N}-\Omega \right\}
\end{equation*}%
and $0<h_{\min }<h_{\max }$ are given constants. \newline
If we choose $h=1$ the following compactness result it is well-known (see
for instance, \cite{brezis} and \cite[Proof of Theorem 1.2, p. 12]{ponce}):

\begin{theorem}
Assume $\left( u_{\delta }\right) _{\delta }$ is a sequence uniformly
bounded in $L^{p}\left( \Omega \right) $ and $C$ is a positive constant such
that 
\begin{equation}
\int_{\Omega }\int_{\Omega }\frac{k_{\delta }\left( \left\vert x^{\prime
}-x\right\vert \right) }{\left\vert x^{\prime }-x\right\vert ^{p}}\left\vert
u_{\delta }\left( x^{\prime }\right) -u_{\delta }\left( x\right) \right\vert
^{p}dx^{\prime }dx\leq C  \label{H1}
\end{equation}%
for any $\delta .$ Then, from $\left( u_{\delta }\right) _{\delta }$ we can
extract a subsequence, still denoted by $\left( u_{\delta }\right) _{\delta
},$ and we can find $u\in W^{1,p}\left( \Omega \right) $ such that, $%
u_{\delta }\rightarrow u$ strongly in $L^{p}\left( \Omega \right) $ if $%
\delta \rightarrow 0.$ Moreover%
\begin{equation}
\lim_{\delta \rightarrow 0}\int_{\Omega }\int_{\Omega }\frac{k_{\delta
}\left( \left\vert x^{\prime }-x\right\vert \right) }{\left\vert x^{\prime
}-x\right\vert ^{p}}\left\vert u_{\delta }\left( x^{\prime }\right)
-u_{\delta }\left( x\right) \right\vert ^{p}dx^{\prime }dx\geq \int_{\Omega
}\left\vert \nabla u\left( x\right) \right\vert ^{p}dx.  \label{Ponce}
\end{equation}
\end{theorem}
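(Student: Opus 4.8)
The plan is to prove the two assertions separately: the strong $L^{p}$-precompactness of a subsequence, and the lower bound \eqref{Ponce}, which will come out simultaneously with the regularity $u\in W^{1,p}(\Omega )$. Throughout, write $J_{\delta }(w)=\int_{\Omega }\int_{\Omega }\frac{k_{\delta }(\left\vert x^{\prime }-x\right\vert )}{\left\vert x^{\prime }-x\right\vert ^{p}}\left\vert w(x^{\prime })-w(x)\right\vert ^{p}\,dx^{\prime }\,dx$, so that hypothesis \eqref{H1} reads $J_{\delta }(u_{\delta })\leq C$. Both parts follow the line of Bourgain--Brezis--Mironescu and Ponce.

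\emph{Compactness.} Since $(u_{\delta })_{\delta }$ is bounded in $L^{p}(\Omega )$, by the Riesz--Fr\'echet--Kolmogorov criterion it suffices to produce, uniformly for small $\delta $, an $L^{p}$-modulus of continuity for the translates on every open $\omega $ with $\overline{\omega }\subset \Omega $. The basic estimate is that $\left\vert z\right\vert ^{-p}\geq \delta ^{-p}$ on $B(0,\delta )$ while $k_{\delta }/C_{N}$ is a probability density there (property (1)), so $J_{\delta }(u_{\delta })\leq C$ forces
\[
\int_{B(0,\delta )}\frac{k_{\delta }(\left\vert z\right\vert )}{C_{N}}\left( \int_{\Omega _{z}}\left\vert u_{\delta }(x+z)-u_{\delta }(x)\right\vert ^{p}\,dx\right) dz\leq \frac{C}{C_{N}}\,\delta ^{p},\qquad \Omega _{z}=\{x:x,\,x+z\in \Omega \},
\]
that is, the $k_{\delta }/C_{N}$-weighted average over $B(0,\delta )$ of the interior translate-errors is of order $\delta ^{p}$. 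Together with the evenness of $z\mapsto \int_{\Omega _{z}}\left\vert u_{\delta }(x+z)-u_{\delta }(x)\right\vert ^{p}dx$ and its subadditivity along chains (a translation realized by $m$ successive elementary steps costs at most $m^{p}$ times one step, by telescoping, Jensen's inequality and the translation invariance of the interior integrals), this feeds into the chaining argument of Ponce (see \cite{ponce}), which produces, for small $\delta $ and $\left\vert h\right\vert <\mathrm{dist}(\omega ,\partial \Omega )$, a chain of $O(\left\vert h\right\vert /\delta )$ elementary displacements realizing the translation by $h$, hence the bound $\int_{\omega }\left\vert u_{\delta }(x+h)-u_{\delta }(x)\right\vert ^{p}\,dx\leq C^{\prime }\left\vert h\right\vert ^{p}$, uniformly in small $\delta $. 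This gives precompactness in $L^{p}_{\mathrm{loc}}(\Omega )$; combined with a uniform bound on $u_{\delta }$ in thin boundary layers (again read off from $J_{\delta }(u_{\delta })\leq C$) and a diagonal extraction along an exhaustion of $\Omega $, one obtains the announced subsequence and a limit $u\in L^{p}(\Omega )$ with $u_{\delta }\to u$ strongly.

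\emph{The inequality and $W^{1,p}$-regularity.} Recall the classical pointwise statement underlying Ponce's inequality --- proved by a Taylor expansion, polar coordinates and the rotation invariance of $\mathcal{H}^{N-1}$, the multiplicative constant being exactly $1$ thanks to the definition of $C_{N}$ and property (1): for every $v\in C^{\infty }(\mathbb{R}^{N})$ and every bounded open $\omega $,
\[
\lim_{\delta \to 0}\int_{\omega }\int_{\omega }\frac{k_{\delta }(\left\vert x^{\prime }-x\right\vert )}{\left\vert x^{\prime }-x\right\vert ^{p}}\left\vert v(x^{\prime })-v(x)\right\vert ^{p}\,dx^{\prime }\,dx=\int_{\omega }\left\vert \nabla v(x)\right\vert ^{p}\,dx.
\]
Let $(\varphi _{\varepsilon })_{\varepsilon }$ be a standard mollifier, extend $u_{\delta }$ and $u$ by zero to $\mathbb{R}^{N}$, and set $u_{\delta ,\varepsilon }=u_{\delta }\ast \varphi _{\varepsilon }$, $v_{\varepsilon }=u\ast \varphi _{\varepsilon }$. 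On $\omega _{\varepsilon }=\{x\in \Omega :\mathrm{dist}(x,\partial \Omega )>\varepsilon \}$, Jensen's inequality (convolution is an average, hence does not increase the convex functional $J_{\delta }$) together with the change of variables $y=x-\zeta $ gives
\[
\int_{\omega _{\varepsilon }}\int_{\omega _{\varepsilon }}\frac{k_{\delta }(\left\vert x^{\prime }-x\right\vert )}{\left\vert x^{\prime }-x\right\vert ^{p}}\left\vert u_{\delta ,\varepsilon }(x^{\prime })-u_{\delta ,\varepsilon }(x)\right\vert ^{p}\,dx^{\prime }\,dx\leq J_{\delta }(u_{\delta }).
\]
Since $u_{\delta }\to u$ in $L^{p}$, for fixed $\varepsilon $ we have $u_{\delta ,\varepsilon }\to v_{\varepsilon }$ in $C^{1}(\overline{\omega _{\varepsilon }})$ as $\delta \to 0$, and the left-hand side above depends continuously on its argument in $C^{1}(\overline{\omega _{\varepsilon }})$ uniformly in $\delta $; hence, letting $\delta \to 0$ and invoking the pointwise limit with $v=v_{\varepsilon }$, $\int_{\omega _{\varepsilon }}\left\vert \nabla v_{\varepsilon }\right\vert ^{p}\leq \liminf_{\delta \to 0}J_{\delta }(u_{\delta })$. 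Finally let $\varepsilon \to 0$: on each $\omega $ with $\overline{\omega }\subset \Omega $ one has $v_{\varepsilon }\to u$ in $L^{p}(\omega )$, while $(\nabla v_{\varepsilon })_{\varepsilon }$ is bounded in $L^{p}(\omega )$; by reflexivity ($p>1$) a subsequence of $(\nabla v_{\varepsilon })_{\varepsilon }$ converges weakly, necessarily to $\nabla u$, so $u\in W^{1,p}(\omega )$, and weak lower semicontinuity of the norm gives $\int_{\omega }\left\vert \nabla u\right\vert ^{p}\leq \liminf_{\delta \to 0}J_{\delta }(u_{\delta })$. Letting $\omega \uparrow \Omega $ yields $u\in W^{1,p}(\Omega )$ and \eqref{Ponce}.

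The main obstacle is the compactness, specifically the chaining step. Hypothesis \eqref{H1} only controls a $k_{\delta }$-weighted average over $B(0,\delta )$ of the interior translate-errors, and upgrading this to a genuine, direction-by-direction $L^{p}$-modulus of continuity is the technical heart of the statement: a naive single-scale chain along the segment from $0$ to $h$ does not suffice, because the \emph{good} displacements (those $z$ with $\int_{\Omega _{z}}\left\vert u_{\delta }(x+z)-u_{\delta }(x)\right\vert ^{p}dx\leq M\left\vert z\right\vert ^{p}$) need not be favorably spread over directions. The control of $u_{\delta }$ in a boundary layer, needed to pass from $L^{p}_{\mathrm{loc}}(\Omega )$ to $L^{p}(\Omega )$, is a second, milder technical point. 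By contrast, once the classical pointwise limit for smooth functions is granted, the inequality \eqref{Ponce} and the $W^{1,p}$-regularity are comparatively soft.
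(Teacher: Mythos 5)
The paper does not actually prove this theorem: it is stated as a known result and referred wholesale to \cite{brezis} and to the proof of Theorem 1.2 in \cite{ponce}. So there is no in-paper argument to compare against; what you have written is a reconstruction of the cited Bourgain--Brezis--Mironescu/Ponce proof. The second half of your sketch --- mollify, use Jensen's inequality and a change of variables to show the functional does not increase under convolution on the shrunken domain $\omega _{\varepsilon }$, pass to the limit in $\delta $ using the pointwise limit for smooth functions together with the uniform-in-$\delta $ continuity of the functional on $C^{1}(\overline{\omega _{\varepsilon }})$, then let $\varepsilon \rightarrow 0$ via boundedness of $\nabla v_{\varepsilon }$ in $L^{p}$, weak compactness and lower semicontinuity --- is a sound outline, and it is in fact the same machinery (the Stein-type inequality of \cite[Lemma 4]{ponce2} and \cite[Proposition 1]{ponce2}) that the paper itself redeploys in its third proof of the generalized inequality \eqref{1b}.

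The compactness half, however, is a description of a proof rather than a proof. You correctly isolate the difficulty: hypothesis \eqref{H1} only yields a $k_{\delta }$-weighted average bound of order $\delta ^{p}$ on the interior translate errors, and upgrading this to a uniform $L^{p}$ modulus of continuity on compact subsets is the technical heart. But at exactly that point you write that the estimate ``feeds into the chaining argument of Ponce'' and move on; the chaining lemma is the only nontrivial ingredient of the compactness claim, so as a standalone argument this is a genuine gap (albeit one the paper also leaves open by citing the theorem). A further small caution: you assert that the pointwise limit for smooth $v$ carries multiplicative constant exactly $1$ ``thanks to the definition of $C_{N}$ and property (1)''. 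With the normalization literally as printed, $\frac{1}{C_{N}}\int_{B(0,\delta )}k_{\delta }=1$, i.e.\ $\int_{B(0,\delta )}k_{\delta }=C_{N}$, the polar-coordinate computation produces the factor $C_{N}^{2}$ rather than $1$; the intended normalization is presumably $C_{N}\int_{B(0,\delta )}k_{\delta }=1$. Your argument is insensitive to this (it merely rescales \eqref{Ponce}), but the claim that the constant is exactly $1$ does not follow from property (1) as stated and should be tracked explicitly.
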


Even though several authors are involved in the proof, we shall refer to the
above estimation (\ref{Ponce}) as Ponce's inequality.

\subsection{Step 1: the objective}

Our goal is to prove the extension of (\ref{Ponce}) in the following sense: 
\begin{equation}
\lim_{\delta \rightarrow 0}\int_{\Omega }\int_{\Omega }H\left( x^{\prime
},x\right) \frac{k_{\delta }\left( \left\vert x^{\prime }-x\right\vert
\right) }{\left\vert x^{\prime }-x\right\vert ^{p}}\left\vert u_{\delta
}\left( x^{\prime }\right) -u_{\delta }\left( x\right) \right\vert
^{p}dx^{\prime }dx\geq \int_{\Omega }h\left( x\right) \left\vert \nabla
u\left( x\right) \right\vert ^{p}dx  \label{1b}
\end{equation}%
where $\Omega $ is an open bounded, $H\left( x^{\prime },x\right) =\frac{%
h\left( x^{\prime })+h(x\right) }{2}$ and $h\in \mathcal{H}.$\newline
As a corollary, we shall prove (\ref{Ponce}) for measurable sets, that is 
\begin{equation}
\lim_{\delta \rightarrow 0}\int_{G}\int_{G}\frac{k_{\delta }\left(
\left\vert x^{\prime }-x\right\vert \right) }{\left\vert x^{\prime
}-x\right\vert ^{p}}\left\vert u_{\delta }\left( x^{\prime }\right)
-u_{\delta }\left( x\right) \right\vert ^{p}dx^{\prime }dx\geq
\int_{G}\left\vert \nabla u\left( x\right) \right\vert ^{p}dx.
\label{Ponce_inequality}
\end{equation}%
\newline
where $G$ is any measurable set in $\Omega $.

\subsection{Motivation and organization of the paper}

The context in which we locate the present article is the study of the
nonlocal $p$-laplacian problem. Before proceeding, we precise of a little
bit of notation: we define the spaces $L_{0}^{p}\left( \Omega _{\delta
}\right) =\left\{ u\in L^{p}\left( \Omega _{\delta }\right) :u=0\text{ in }%
\mathbb{R}^{N}\setminus \Omega \right\} $ and $X=\left\{ u\in
L_{0}^{p}\left( \Omega _{\delta }\right) :B\left( u,u\right) <\infty
\right\} $ where $B=B_{1}$, that is, $B$ is the operator defined in $X\times
X$ by means of the formula 
\begin{equation*}
B\left( u,v\right) =\int_{\Omega _{\delta }}\int_{\Omega _{\delta }}\frac{%
k_{\delta }\left( \left\vert x^{\prime }-x\right\vert \right) }{\left\vert
x^{\prime }-x\right\vert ^{p}}\left\vert u\left( x^{\prime }\right) -u\left(
x\right) \right\vert ^{p-2}\left( u\left( x^{\prime }\right) -u\left(
x\right) \right) \left( v\left( x^{\prime }\right) -v\left( x\right) \right)
dx^{\prime }dx.
\end{equation*}%
We also define the space $X_{0}$ as $X_{0}=\func{cl}\left( C_{co}^{\infty
}\left( \Omega _{\delta }\right) \right) $ where%
\begin{equation*}
C_{co}^{\infty }\left( \Omega _{\delta }\right) =\left\{ f:\Omega _{\delta
}\rightarrow \mathbb{R}:f\in C_{c}^{\infty }\left( \Omega \right) \text{ and 
}f=0\text{ in }\Omega _{\delta }-\Omega \right\} \subset X
\end{equation*}%
and $\func{cl}\left( C_{co}^{\infty }\left( \Omega _{\delta }\right) \right) 
$ is the closure with respect to the norm $\left\Vert \cdot \right\Vert $
given in $X$ via the functional $B\left( \cdot ,\cdot \right) ,$ that means%
\begin{equation*}
X_{0}=\left\{ v\in X:\text{there is }\left( v_{j}\right) \subset
C_{co}^{\infty }\left( \Omega _{\delta }\right) \text{ such that }%
\lim_{j}B\left( v_{j}-v,v_{j}-v\right) =0\right\} .
\end{equation*}
We define now the following nonlocal variational problem: given $f\in
L^{p^{\prime }}\left( \Omega \right) $, where $p^{\prime }=\frac{p}{p-1}$,
and $p>1$, find $u\in X_{0}$ such that \ 
\begin{equation}
B_{h}\left( u,w\right) =\left( f,w\right) _{L^{p^{\prime }}\left( \Omega
\right) \times L^{p}\left( \Omega \right) }\text{ in }X_{0}.
\label{variational1}
\end{equation}%
Notice (\ref{variational1}) is equivalent to say that%
\begin{equation}
\int_{\Omega _{\delta }}\int_{\Omega _{\delta }}H\left( x^{\prime },x\right)
k_{\delta }\left( \left\vert x^{\prime }-x\right\vert \right) \frac{%
\left\vert u\left( x^{\prime }\right) -u\left( x\right) \right\vert
^{p-2}\left( u\left( x^{\prime }\right) -u\left( x\right) \right) \left(
w\left( x^{\prime }\right) -w\left( x\right) \right) }{\left\vert x^{\prime
}-x\right\vert ^{p}}dx^{\prime }dx=\int_{\Omega _{\delta }}fwdx
\label{variational2}
\end{equation}%
holds for any $w\in $ $X_{0}.$ Since the existence and uniqueness of
solution for this problem is a well-known fact, then, for $h$ fixed, and for
any $\delta ,$ there exists a solution $u_{\delta }.$ The aim is to check
whether the sequence of solutions $\left( u_{\delta }\right) _{\delta }$
converges to the solution of the corresponding local $p$-laplacian equation.
This convergence (or $G$-convergence$)$ clearly entails the study of the
minimization principle%
\begin{equation*}
\min_{w\in X_{0}}\left\{ \frac{1}{p}B_{h}\left( w,w\right) -\int_{\Omega
}f\left( x\right) w\left( x\right) dx\right\}
\end{equation*}%
and consequently, this task inevitably leads us to the study of the problem
posed above. \cite{Rossi2, D'Elia-Gunz2, Bonder, Andres-Julio3} are some
references where this type of convergence is analyzed.\smallskip

The manuscript is organized by means of three sections containing different
proofs of (\ref{1b}) and (\ref{Ponce_inequality}).

\section{First proof\label{first proof}}

Our essential tool in order to generalize (\ref{1b}), is a convenient Vitali
covering of the set $\Omega $ (see \cite{Saks} for the details).

\begin{lemma}
Let $\mathcal{A}=\left\{ F_{k}\right\} _{k\in K}$ be a Vitali covering of $%
\Omega .$ There is a sequence of $k_{i}\in K$ such that $\left\vert \Omega
\setminus \cup _{i}F_{k_{i}}\right\vert =0$ and the sets $F_{k_{i}}$ are
pairwise disjoints.
\end{lemma}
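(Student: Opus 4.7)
The plan is to prove this classical Vitali-type covering result by the standard greedy selection argument combined with a $5r$-inflation estimate, following Saks directly. The key is to extract a disjoint subsequence whose diameters shrink fast enough that a geometric swelling covers whatever is left.

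First I would unpack the definition of a Vitali covering: every $x \in \Omega$ lies in members of $\mathcal{A}$ of arbitrarily small diameter, and the members have bounded eccentricity (each $F_k$ sits between two concentric balls of comparable radius). Localising to a bounded neighbourhood of $\Omega$, I may further assume $D := \sup_{k} \mathrm{diam}(F_k) < \infty$.

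Second, I would extract the disjoint subsequence inductively. Having chosen pairwise disjoint $F_{k_1}, \ldots, F_{k_{n-1}}$, set
\[
d_n = \sup\{\mathrm{diam}(F_k) : F_k \in \mathcal{A},\ F_k \cap F_{k_j} = \emptyset \text{ for all } j < n\}
\]
and pick $F_{k_n}$ eligible with $\mathrm{diam}(F_{k_n}) \geq d_n/2$. If the process terminates at a finite stage the conclusion is trivial by the Vitali property; otherwise the disjoint sets lie in a bounded set, so $\sum_n |F_{k_n}| < \infty$ and in particular $d_n \to 0$.

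Third, the decisive step is the inflation argument. Let $\widehat{F}_j$ denote the $5$-fold dilate of the ball circumscribing $F_{k_j}$. I would show that every $x \in \Omega \setminus \bigcup_{i \leq n} F_{k_i}$ lies in some $\widehat{F}_j$ with $j > n$: by the Vitali property one finds $F_k \ni x$ of arbitrarily small diameter, disjoint from $F_{k_1}, \ldots, F_{k_n}$; since $d_j \to 0$, this $F_k$ cannot remain eligible at every stage, so it must meet some $F_{k_j}$ with $j > n$; taking the first such $j$ and using $\mathrm{diam}(F_k) \leq 2 d_j \leq 4\, \mathrm{diam}(F_{k_j})$ together with bounded eccentricity gives $F_k \subset \widehat{F}_j$. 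Taking measures,
\[
\Bigl| \Omega \setminus \bigcup_{i=1}^{\infty} F_{k_i} \Bigr| \leq \sum_{j>n} |\widehat{F}_j| \leq 5^N \sum_{j>n} |F_{k_j}| \to 0 \quad \text{as } n \to \infty.
\]
The main obstacle is keeping the geometric constants honest in this last step: the inclusion $F_k \subset \widehat{F}_j$ relies crucially on the bounded eccentricity built into the notion of a Vitali cover, and without it the factor $5^N$ would not be available.
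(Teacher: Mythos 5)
The paper does not actually prove this lemma: it is stated as a known result and delegated entirely to Saks's \emph{Theory of the Integral}, so there is no in-paper argument to compare against. Your proof is the standard greedy-selection plus $5r$-dilation argument, and it is essentially correct as a proof of the classical Vitali covering theorem for a \emph{regular} Vitali cover by \emph{closed} sets. Two hypotheses are doing silent work and deserve to be stated explicitly, since the bare statement of the lemma does not contain them. First, the sets $F_k$ must be closed (or at least the finite unions $\cup_{i\le n}F_{k_i}$ must be closed): both in the termination case and in the inflation step you need a point $x\notin\cup_{i\le n}F_{k_i}$ to have \emph{positive distance} to that union, so that the Vitali property produces an $F_k\ni x$ disjoint from all of $F_{k_1},\dots,F_{k_n}$. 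Second, bounded eccentricity (regularity, $|F|\ge c\,(\operatorname{diam}F)^N$) is needed not only for the inclusion $F_k\subset\widehat F_j$ and the bound $|\widehat F_j|\le C|F_{k_j}|$, as you note, but already to deduce $d_n\to 0$ from $\sum_n|F_{k_n}|<\infty$. Both hypotheses hold in the only place the paper uses the lemma, namely for the families $F_k=a+\epsilon\overline\Omega$ of Proposition \ref{propo}, which are closed homothetic copies of a fixed bounded set with nonempty interior. One harmless slip: since $F_k$ is still eligible at stage $j$, you have $\operatorname{diam}(F_k)\le d_j\le 2\operatorname{diam}(F_{k_j})$ directly; the extra factor of $2$ in your chain only loosens the dilation constant, not the conclusion.
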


In a first step we assume $h$ is continuous a.e. in $\Omega $. We adapt \cite%
[Lemma 7.9, p. 129]{pedregal} in order to prove our key result:

\begin{proposition}
\label{propo}Let $\Omega \subset \mathbb{R}^{N}$ be an open bounded set in $%
\mathbb{R}^{N}$ such that $\left\vert \partial \Omega \right\vert =0$ and $%
f, $ a positive and a.e. continuos function defined in $\Omega .$ There
exists a set of points $\left\{ a_{ki}\right\} \subset \Omega $ and positive
sets of numbers $\left\{ \epsilon _{ki}\right\} \ $and $r_{k}\left(
a_{ki}\right) ,$ such that $\epsilon _{ki}\leq r_{k}\left( a_{ki}\right) ,$ 
\begin{eqnarray*}
&&\left\{ a_{ki}+\epsilon _{ki}\Omega \right\} \text{ are pairwise disjoint
for each }k, \\
\overline{\Omega } &=&\cup _{i}\left\{ a_{ki}+\epsilon _{ki}\overline{\Omega 
}\right\} \cup N_{k},\text{ where }\left\vert N_{k}\right\vert =0
\end{eqnarray*}%
and%
\begin{equation}
\int_{\Omega }f\left( x\right) \xi \left( x\right) dx=\sum_{i}f\left(
a_{ki}\right) \int_{a_{ki}+\epsilon _{ki}\Omega }\xi \left( x\right)
dx+o\left( 1\right)  \label{1}
\end{equation}%
for any $\xi \in L^{1}\left( \Omega \right) ,$ where $\left\vert o\left(
1\right) \right\vert \leq \frac{1}{k}\left\Vert \xi \right\Vert
_{L^{1}\left( \Omega \right) }$ if $k\rightarrow +\infty .$
\end{proposition}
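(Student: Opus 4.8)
The plan is to mimic the classical "Vitali-type" scaling argument adapted to the integrand $f$ being merely a.e. continuous and positive. First I would fix $k$ and build a Vitali covering of $\Omega$ by translated-rescaled copies $a+\epsilon\overline{\Omega}$ of $\overline{\Omega}$, where $a$ ranges over the Lebesgue points of $f$ at which $f$ is continuous (a set of full measure, since $f$ is continuous a.e.) and the scaling factors $\epsilon$ are taken so small that $\epsilon\le r_k(a)$, with $r_k(a)$ chosen (using continuity of $f$ at $a$) so that $|f(x)-f(a)|\le 1/k$ whenever $|x-a|\le r_k(a)$. This family is a genuine Vitali covering of $\Omega$ (arbitrarily small sets around every point of a full-measure subset, of regular shape so that the measure-density condition holds), so Lemma 1 applies and yields a countable pairwise-disjoint subfamily $\{a_{ki}+\epsilon_{ki}\overline{\Omega}\}_i$ with $|\Omega\setminus\cup_i(a_{ki}+\epsilon_{ki}\Omega)|=0$; setting $N_k=\overline{\Omega}\setminus\cup_i\{a_{ki}+\epsilon_{ki}\overline{\Omega}\}$ gives the stated covering with $|N_k|=0$.

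Next I would verify the integral identity (\ref{1}). For $\xi\in L^1(\Omega)$ write, using that the pieces cover $\Omega$ up to a null set,
\begin{equation*}
\int_{\Omega}f(x)\xi(x)\,dx=\sum_i\int_{a_{ki}+\epsilon_{ki}\Omega}f(x)\xi(x)\,dx=\sum_i f(a_{ki})\int_{a_{ki}+\epsilon_{ki}\Omega}\xi(x)\,dx+R_k,
\end{equation*}
where the remainder is $R_k=\sum_i\int_{a_{ki}+\epsilon_{ki}\Omega}\bigl(f(x)-f(a_{ki})\bigr)\xi(x)\,dx$. On the piece $a_{ki}+\epsilon_{ki}\Omega$ one has $|x-a_{ki}|\le\epsilon_{ki}\,\mathrm{diam}(\Omega)$, and by choosing the $r_k(a)$ appropriately (rescaling $r_k$ by $\mathrm{diam}(\Omega)$ or, more simply, demanding $|f(x)-f(a)|\le 1/k$ on $a+\epsilon\overline{\Omega}$ directly in the definition of the admissible radii) we get $|f(x)-f(a_{ki})|\le 1/k$ there. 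Hence
\begin{equation*}
|R_k|\le\frac{1}{k}\sum_i\int_{a_{ki}+\epsilon_{ki}\Omega}|\xi(x)|\,dx=\frac{1}{k}\int_{\Omega}|\xi(x)|\,dx=\frac{1}{k}\|\xi\|_{L^1(\Omega)},
\end{equation*}
using disjointness and the covering property once more. This is exactly the claimed bound $|o(1)|\le\frac1k\|\xi\|_{L^1(\Omega)}$, which tends to $0$ as $k\to\infty$.

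The main obstacle is making sure the family really is an admissible Vitali covering and that the continuity modulus can be controlled uniformly on each \emph{scaled} cell rather than on a fixed ball: since $f$ is only a.e. continuous, the radii $r_k(a)$ must be defined solely for points $a$ in the full-measure set of continuity points, and one must check that the cells $a+\epsilon\overline{\Omega}$ (which are not balls) still satisfy the regularity/density hypothesis required by Saks' version of the Vitali covering theorem in Lemma 1 — this is where the assumptions $|\partial\Omega|=0$ and boundedness of $\Omega$ enter, guaranteeing each cell has positive density and controlled eccentricity. A secondary but routine point is measurability of $x\mapsto f(a_{ki(x)})$, the piecewise-constant approximant, which follows from the countability of the subfamily. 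Once these are in place, the estimate above closes the proof; I would then, in the subsequent sections, feed this piecewise-constant approximation of $h$ into Ponce's inequality (\ref{Ponce}) applied on each cell $a_{ki}+\epsilon_{ki}\Omega$ (a rescaled copy of $\Omega$) to obtain (\ref{1b}).
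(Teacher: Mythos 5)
Your proposal is correct and follows essentially the same route as the paper: a Vitali covering of $\Omega$ by scaled copies $a+\epsilon\overline{\Omega}$ centered at continuity points of $f$, with the oscillation bound $\left\vert f(x)-f(a)\right\vert \leq \frac{1}{k}$ built into the definition of the admissible cells, followed by extraction of a countable disjoint subfamily and the remainder estimate $\left\vert R_{k}\right\vert \leq \frac{1}{k}\left\Vert \xi \right\Vert _{L^{1}\left( \Omega \right) }$. Your added remarks on the regularity of the (non-ball) cells needed for Saks' covering theorem are a reasonable amplification of a point the paper leaves implicit.
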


\begin{proof}
Let $C$ be the set of points of continuity of $f$. We define the family of
sets 
\begin{equation*}
F_{k}=\left\{ a+\epsilon \overline{\Omega }:a\in C,\text{ }\epsilon \leq
r_{k}\left( a\right) ,\text{ }\left\vert f\left( x\right) -f\left( a\right)
\right\vert \leq \frac{1}{k}\text{ for any }x\in a+\epsilon \overline{\Omega 
}\text{ and }a+\epsilon \overline{\Omega }\subset \Omega \right\} .
\end{equation*}%
This family covers $C$ (and $\Omega )$ in the sense of Vitali. Thus, from
this family we are able to choose a numerable sequence of disjoints sets $%
\left\{ a_{ki}+\epsilon _{ki}\Omega \right\} _{i}\in F_{k},$ whose union
covers $\Omega .$ Since $f$ is continuous in $\overline{a_{ki}+\epsilon
_{ki}\Omega }$, the sequence $\epsilon _{ki}$ can be chosen so that%
\begin{equation*}
\left\vert f\left( x\right) -f\left( a_{ki}\right) \right\vert \leq \frac{1}{%
k},\text{ for any }x\in \overline{a_{ki}+\epsilon _{ki}\Omega }
\end{equation*}%
for any $i$ and any $k.$ Consequently, we note%
\begin{eqnarray*}
&&\left\vert \int_{\Omega }\xi \left( x\right) f\left( x\right)
dx-\sum_{i}f\left( a_{ki}\right) \int_{a_{ki}+\epsilon _{ki}\Omega }\xi
\left( x\right) dx\right\vert \\
&=&\left\vert \sum_{i}\int_{a_{ki}+\epsilon _{ki}\Omega }\left( f\left(
x\right) -f\left( a_{ki}\right) \right) \xi \left( x\right) dx\right\vert \\
&\leq &\sum_{i}\int_{a_{ki}+\epsilon _{ki}\Omega }\left\vert \left( f\left(
x\right) -f\left( a_{ki}\right) \right) \right\vert \left\vert \xi \left(
x\right) \right\vert dx \\
&\leq &\frac{1}{k}\sum_{i}\int_{a_{ki}+\epsilon _{ki}\Omega }\left\vert \xi
\left( x\right) \right\vert dx \\
&=&\frac{1}{k}\left\Vert \xi \right\Vert _{L^{1}\left( \Omega \right) }
\end{eqnarray*}
\end{proof}

\subsection{Application}

We apply the above analysis to the integral 
\begin{equation*}
I=\int_{\Omega }\int_{\Omega }H\left( x^{\prime },x\right) \xi _{\delta
}\left( x^{\prime },x\right) dx^{\prime }dx
\end{equation*}%
where%
\begin{equation}
\xi _{\delta }\left( x^{\prime },x\right) =\frac{k_{\delta }\left(
\left\vert x^{\prime }-x\right\vert \right) }{\left\vert x^{\prime
}-x\right\vert ^{p}}\left\vert u_{\delta }\left( x^{\prime }\right)
-u_{\delta }\left( x\right) \right\vert  \label{xis}
\end{equation}%
We consider $\Omega \times \Omega $ instead of $\Omega $ and now, $f\left(
x^{\prime },x\right) $ is the symmetric function $H\left( x^{\prime
},x\right) =\frac{h\left( x^{\prime }\right) +h\left( x\right) }{2},$ with $%
h\in \mathcal{H}.$ We assume $h$\ is continuous and we notice the family $%
\cup _{i,j}\left( a_{ki}+\epsilon _{ki}\Omega \right) \times \left(
a_{kj}+\epsilon _{kj}\Omega \right) $ is a Vitali covering of $\Omega \times
\Omega .$ Then, according to the above discussion 
\begin{eqnarray*}
I &=&\sum_{i,j}H\left( a_{ki},a_{kj}\right) \int_{a_{ki}+\epsilon
_{ki}\Omega }\int_{a_{kj}+\epsilon _{kj}\Omega }\frac{k_{\delta }\left(
\left\vert x^{\prime }-x\right\vert \right) }{\left\vert x^{\prime
}-x\right\vert ^{p}}\left\vert u_{\delta }\left( x^{\prime }\right)
-u_{\delta }\left( x\right) \right\vert ^{p}dx^{\prime }dx+o\left( 1\right)
\\
&\geq &\sum_{i}H\left( a_{ki},a_{ki}\right) \int_{a_{ki}+\epsilon
_{ki}\Omega }\int_{a_{ki}+\epsilon _{ki}\Omega }\frac{k_{\delta }\left(
\left\vert x^{\prime }-x\right\vert \right) }{\left\vert x^{\prime
}-x\right\vert ^{p}}\left\vert u_{\delta }\left( x^{\prime }\right)
-u_{\delta }\left( x\right) \right\vert ^{p}dx^{\prime }dx+o\left( 1\right)
\\
&=&\sum_{i}h\left( a_{ki}\right) \int_{a_{ki}+\epsilon _{ki}\Omega
}\int_{a_{ki}+\epsilon _{ki}\Omega }\frac{k_{\delta }\left( \left\vert
x^{\prime }-x\right\vert \right) }{\left\vert x^{\prime }-x\right\vert ^{p}}%
\left\vert u_{\delta }\left( x^{\prime }\right) -u_{\delta }\left( x\right)
\right\vert ^{p}dx^{\prime }dx+o\left( 1\right)
\end{eqnarray*}

We pass to the limit when $\delta \rightarrow 0$ in $I:$ we use (\ref{H1}),
Fatou's Lemma and (\ref{Ponce}) for open sets, to derive 
\begin{eqnarray*}
\lim_{\delta \rightarrow 0}I &\geq &\lim_{\delta \rightarrow
0}\sum_{i}h\left( a_{ki}\right) \int_{a_{ki}+\epsilon _{ki}\Omega
}\int_{a_{ki}+\epsilon _{ki}\Omega }\frac{k_{\delta }\left( \left\vert
x^{\prime }-x\right\vert \right) }{\left\vert x^{\prime }-x\right\vert ^{p}}%
\left\vert u_{\delta }\left( x^{\prime }\right) -u_{\delta }\left( x\right)
\right\vert ^{p}dx^{\prime }dx-\frac{C}{k} \\
&\geq &\sum_{i}h\left( a_{ki}\right) \left( \lim_{\delta \rightarrow
0}\int_{a_{ki}+\epsilon _{ki}\Omega }\int_{a_{ki}+\epsilon _{ki}\Omega }%
\frac{k_{\delta }\left( \left\vert x^{\prime }-x\right\vert \right) }{%
\left\vert x^{\prime }-x\right\vert ^{p}}\left\vert u_{\delta }\left(
x^{\prime }\right) -u_{\delta }\left( x\right) \right\vert ^{p}dx^{\prime
}dx\right) -\frac{C}{k} \\
&\geq &\sum_{i}h\left( a_{ki}\right) \left( \int_{a_{ki}+\epsilon
_{ki}\Omega }\left\vert \nabla u\left( x\right) \right\vert ^{p}dx\right) -%
\frac{C}{k}.
\end{eqnarray*}

If we take limits in $k\rightarrow +\infty ,$ then the above estimation gives%
\begin{equation*}
\lim_{\delta \rightarrow 0}I\geq \lim_{k\rightarrow +\infty }\sum_{i}h\left(
a_{ki}\right) \int_{a_{ki}+\epsilon _{ki}\Omega }\left\vert \nabla u\left(
x\right) \right\vert ^{p}dx.
\end{equation*}%
By using again Proposition \ref{propo}, the last inequality is clearly
equivalent to 
\begin{equation*}
\lim_{\delta \rightarrow 0}\int_{\Omega }\int_{\Omega }H\left( x^{\prime
},x\right) \frac{k_{\delta }\left( \left\vert x^{\prime }-x\right\vert
\right) }{\left\vert x^{\prime }-x\right\vert ^{p}}\left\vert u_{\delta
}\left( x^{\prime }\right) -u_{\delta }\left( x\right) \right\vert
^{p}dx^{\prime }dx\geq \int_{\Omega }h\left( x\right) \left\vert \nabla
u\left( x\right) \right\vert ^{p}dx
\end{equation*}%
which is the thesis (\ref{1b}) we desired to prove.

\begin{remark}
\label{nota3}The analysis and conclusion we have just arrived, remain valid
if we consider any open set $O\subset \Omega $ such that $\left\vert
\partial O\right\vert =0.$ We can go an step further, the inequality%
\begin{equation}
\lim_{\delta \rightarrow 0}\int_{\Omega }\int_{\Omega }F\left( x^{\prime
},x\right) \frac{k_{\delta }\left( \left\vert x^{\prime }-x\right\vert
\right) }{\left\vert x^{\prime }-x\right\vert ^{p}}\left\vert u_{\delta
}\left( x^{\prime }\right) -u_{\delta }\left( x\right) \right\vert
^{p}dx^{\prime }dx\geq \int_{\Omega }F\left( x,x\right) \left\vert \nabla
u\left( x\right) \right\vert ^{p}dx  \label{4}
\end{equation}%
holds for any symmetric, positive and continuous function $F\in L^{\infty
}\left( \Omega \times \Omega \right) .$
\end{remark}

\subsection{Extension to the case of measurable functions\label{measurable}}

Assume now $h$ is just measurable. We know $\limfunc{supp}H\subset \Omega
\times \Omega $ and $H=0$ otherwise. By Luzin's Theorem (see \cite[Theorem
2.24, p. 62]{Rudin}), given an arbitrary $\epsilon >0$ there exists a
continuous function $G\in C_{c}\left( \Omega \times \Omega \right) $ such
that $\sup G\left( x,y\right) \leq \sup H\left( x,y\right) $ and $G\left(
x,y\right) =H\left( x,y\right) $ for any $\left( x,y\right) \in \left(
\Omega \times \Omega \right) \setminus \mathcal{E},$ where $\mathcal{E}$ is
a measurable set such that $\left\vert \mathcal{E}\right\vert <\epsilon
^{2}. $ Since $H$ is symmetric then we are allowed to assume $\left( \Omega
\times \Omega \right) \setminus \mathcal{E=}\left( \Omega -E\right) \times
\left( \Omega -E\right) $ where $E\subset \Omega $ is a measurable set such
that $\left\vert E\right\vert <\epsilon .$

At this stage we consider a family of relative open sets $B_{n}$ in $\Omega
, $ such that $E\subset \overline{B}_{n}\subset \Omega $ and $%
B_{n}\downarrow E.$ Then%
\begin{eqnarray*}
&&\int_{\Omega }\int_{\Omega }H\left( x^{\prime },x\right) \frac{k_{\delta
}\left( \left\vert x^{\prime }-x\right\vert \right) }{\left\vert x^{\prime
}-x\right\vert ^{p}}\left\vert u_{\delta }\left( x^{\prime }\right)
-u_{\delta }\left( x\right) \right\vert ^{p}dx^{\prime }dx \\
&\geq &\iint_{\left( \Omega -\overline{B}_{n}\right) \times \left( \Omega -%
\overline{B}_{n}\right) }H\left( x^{\prime },x\right) \frac{k_{\delta
}\left( \left\vert x^{\prime }-x\right\vert \right) }{\left\vert x^{\prime
}-x\right\vert ^{p}}\left\vert u_{\delta }\left( x^{\prime }\right)
-u_{\delta }\left( x\right) \right\vert ^{p}dx^{\prime }dx \\
&=&\iint_{\left( \Omega -\overline{B}_{n}\right) \times \left( \Omega -%
\overline{B}_{n}\right) }G\left( x^{\prime },x\right) \frac{k_{\delta
}\left( \left\vert x^{\prime }-x\right\vert \right) }{\left\vert x^{\prime
}-x\right\vert ^{p}}\left\vert u_{\delta }\left( x^{\prime }\right)
-u_{\delta }\left( x\right) \right\vert ^{p}dx^{\prime }dx
\end{eqnarray*}%
We fix $n$ and take limits in $\delta $ to get%
\begin{eqnarray*}
&&\lim_{\delta \rightarrow 0}\int_{\Omega }\int_{\Omega }H\left( x^{\prime
},x\right) \frac{k_{\delta }\left( \left\vert x^{\prime }-x\right\vert
\right) }{\left\vert x^{\prime }-x\right\vert ^{p}}\left\vert u_{\delta
}\left( x^{\prime }\right) -u_{\delta }\left( x\right) \right\vert
^{p}dx^{\prime }dx \\
&\geq &\lim_{\delta \rightarrow 0}\iint_{\left( \Omega -\overline{B}%
_{n}\right) \times \left( \Omega -\overline{B}_{n}\right) }G\left( x^{\prime
},x\right) \frac{k_{\delta }\left( \left\vert x^{\prime }-x\right\vert
\right) }{\left\vert x^{\prime }-x\right\vert ^{p}}\left\vert u_{\delta
}\left( x^{\prime }\right) -u_{\delta }\left( x\right) \right\vert
^{p}dx^{\prime }dx \\
&\geq &\int_{\left( \Omega -\overline{B}_{n}\right) }G\left( x,x\right)
\left\vert \nabla u\left( x\right) \right\vert ^{p}dx \\
&=&\int_{\left( \Omega -\overline{B}_{n}\right) }h\left( x\right) \left\vert
\nabla u\left( x\right) \right\vert ^{p}dx
\end{eqnarray*}%
where the second inequality is true thanks to (\ref{4}). Then, since $%
B_{n}\downarrow E,$ we obtain 
\begin{eqnarray*}
&&\lim_{\delta \rightarrow 0}\int_{\Omega }\int_{\Omega }H\left( x^{\prime
},x\right) \frac{k_{\delta }\left( \left\vert x^{\prime }-x\right\vert
\right) }{\left\vert x^{\prime }-x\right\vert ^{p}}\left\vert u_{\delta
}\left( x^{\prime }\right) -u_{\delta }\left( x\right) \right\vert
^{p}dx^{\prime }dx \\
&\geq &\int_{\Omega }h\left( x\right) \left\vert \nabla u\left( x\right)
\right\vert ^{p}dx-\int_{E}h\left( x\right) \left\vert \nabla u\left(
x\right) \right\vert ^{p}dx
\end{eqnarray*}%
By letting $\epsilon \downarrow 0$ and using $\left\vert E\right\vert \leq
\epsilon $ we obtain (\ref{1b}):%
\begin{equation}
\lim_{\delta \rightarrow 0}\int_{\Omega }\int_{\Omega }H\left( x^{\prime
},x\right) \frac{k_{\delta }\left( \left\vert x^{\prime }-x\right\vert
\right) }{\left\vert x^{\prime }-x\right\vert ^{p}}\left\vert u_{\delta
}\left( x^{\prime }\right) -u_{\delta }\left( x\right) \right\vert
^{p}dx^{\prime }dx\geq \int_{\Omega }H\left( x,x\right) \left\vert \nabla
u\left( x\right) \right\vert ^{p}dx.  \label{5}
\end{equation}

Finally, in order to avoid the assumption $\left\vert \partial \Omega
\right\vert =0$ we simplify as follows: for any given $\Omega $ we consider $%
\Omega _{r},\ $with $r>0$, and we extend $H$ by zero in $\Omega _{r}\times
\Omega _{r}\setminus \Omega \times \Omega .$ If we denote this extended
function by $H_{0},$ which is measurable, and we take into account that
boundary of $\Omega _{r}$ has measure zero, then (\ref{5}) allow us to write 
\begin{equation*}
\lim_{\delta \rightarrow 0}\int_{\Omega _{r}}\int_{\Omega _{r}}H_{0}\left(
x^{\prime },x\right) \frac{k_{\delta }\left( \left\vert x^{\prime
}-x\right\vert \right) }{\left\vert x^{\prime }-x\right\vert ^{p}}\left\vert
u_{\delta }\left( x^{\prime }\right) -u_{\delta }\left( x\right) \right\vert
^{p}dx^{\prime }dx\geq \int_{\Omega _{r}}H_{0}\left( x,x\right) \left\vert
\nabla u\left( x\right) \right\vert ^{p}dx.
\end{equation*}%
But the above inequality coincides with (\ref{5}), 
\begin{equation}
\lim_{\delta \rightarrow 0}\int_{\Omega }\int_{\Omega }H\left( x^{\prime
},x\right) \frac{k_{\delta }\left( \left\vert x^{\prime }-x\right\vert
\right) }{\left\vert x^{\prime }-x\right\vert ^{p}}\left\vert u_{\delta
}\left( x^{\prime }\right) -u_{\delta }\left( x\right) \right\vert
^{p}dx^{\prime }dx\geq \int_{\Omega }H\left( x,x\right) \left\vert \nabla
u\left( x\right) \right\vert ^{p}dx  \label{6}
\end{equation}%
for any open and bounded set $\Omega .$

\subsection{A Corollary}

We apply (\ref{5}) to the case $F\left( x^{\prime },x\right) =I_{G\times
G}\left( x^{\prime },x\right) $, where $G$ is any measurable set included in 
$\Omega $: on the one hand, (\ref{6}) guarantees%
\begin{equation*}
\begin{tabular}{c}
$\displaystyle\lim_{\delta \rightarrow 0}\int_{\Omega }\int_{\Omega }F\left(
x^{\prime },x\right) \frac{k_{\delta }\left( \left\vert x^{\prime
}-x\right\vert \right) }{\left\vert x^{\prime }-x\right\vert ^{p}}\left\vert
u_{\delta }\left( x^{\prime }\right) -u_{\delta }\left( x\right) \right\vert
^{p}dx^{\prime }dx\geq \int_{\Omega }F\left( x,x\right) \left\vert \nabla
u\left( x\right) \right\vert ^{p}dx\smallskip $ \\ 
$\displaystyle=\int_{G}I_{G}\left( x\right) \left\vert \nabla u\left(
x\right) \right\vert ^{p}dx=\int_{G}\left\vert \nabla u\left( x\right)
\right\vert ^{p}dx.$%
\end{tabular}%
\end{equation*}

On the other hand, it is obvious that

\begin{equation*}
\int_{\Omega }\int_{\Omega }F\left( x^{\prime },x\right) \frac{k_{\delta
}\left( \left\vert x^{\prime }-x\right\vert \right) }{\left\vert x^{\prime
}-x\right\vert ^{p}}\left\vert u_{\delta }\left( x^{\prime }\right)
-u_{\delta }\left( x\right) \right\vert ^{p}dx^{\prime }dx=\int_{G}\int_{G}%
\frac{k_{\delta }\left( \left\vert x^{\prime }-x\right\vert \right) }{%
\left\vert x^{\prime }-x\right\vert ^{p}}\left\vert u_{\delta }\left(
x^{\prime }\right) -u_{\delta }\left( x\right) \right\vert ^{p}dx^{\prime
}dx.
\end{equation*}%
Consequently (\ref{Ponce_inequality}) has been proved for any measurable set 
$G\subset \Omega $.

\section{A second proof\label{second proof}}

We firstly prove (\ref{Ponce_inequality}) and then (\ref{1b}). By
hypothesis, there is constant $C$ such that $\int_{\Omega }\int_{\Omega }\xi
_{\delta }\left( x^{\prime },x\right) dx^{\prime }dx\leq C$ for any $\delta
, $ where $\xi _{\delta }\left( x^{\prime },x\right) \ $is defined as in (%
\ref{xis}). Thus $\left( \xi _{\delta }\right) _{\delta }$ is a sequence
uniformly bounded in $L^{1}\left( \Omega \times \Omega \right) \ $and under
these circumstances, we can use Chacon's biting Lemma (\cite{Chacon}) to
ensure the existence of a decreasing sequence of measurable sets $\mathcal{E}%
_{n}\subset \Omega \times \Omega ,$ such that $\left\vert \mathcal{E}%
_{n}\right\vert \downarrow 0,\ $and a function $\phi \in L^{1}\left( \Omega
\times \Omega \right) ,$ such that $\xi _{\delta }\rightharpoonup \xi $
weakly in $L^{1}\left( \Omega \times \Omega \setminus \mathcal{E}_{n}\right) 
$ for all $n.$ Since we are dealing with a sequence of symmetric functions
we can ensure $\Omega \times \Omega \setminus \mathcal{E}_{n}=\left( \Omega
\setminus E_{n}\right) \times \left( \Omega \setminus E_{n}\right) $ where
the sequence of sets $E_{n}\subset \Omega $ \ is decreasing and $\left\vert
E_{n}\right\vert \downarrow 0$ if $n\rightarrow \infty .$

Let $B_{n}$ be any open set such that $E_{n}\subset B_{n},$ and $\left\vert
B_{n}\right\vert \downarrow 0.$ We apply Chacon's biting lemma to guarantee
the convergence%
\begin{equation*}
\lim_{\delta \rightarrow 0}\iint_{A\times A}\xi _{\delta }\left( x^{\prime
},x\right) dx^{\prime }dx=\iint_{A\times A}\xi \left( x^{\prime },x\right)
dx^{\prime }dx
\end{equation*}%
for any open $A\times A\subset \left( \Omega \setminus B_{n}\right) \times
\left( \Omega \setminus B_{n}\right) .$ Also, (\ref{Ponce_inequality}) for
open sets gives%
\begin{equation*}
\lim_{\delta \rightarrow 0}\iint_{A\times A}\xi _{\delta }\left( x^{\prime
},x\right) dx^{\prime }dx\geq \int_{A}\left\vert \nabla u\left( x\right)
\right\vert ^{p}dx,
\end{equation*}%
for any open set $A\subset \Omega \setminus B_{n}.$ Thus, the above
discussion gives%
\begin{equation*}
\iint_{A\times A}\xi \left( x^{\prime },x\right) dx^{\prime }dx\geq
\int_{A}\left\vert \nabla u\left( x\right) \right\vert ^{p}dx\text{ for any
open set }A\subset \Omega \setminus B_{n}.
\end{equation*}%
If this statement is true for open sets $A\subset \Omega \setminus B_{n}$,
it is so for measurable sets $E\subset \Omega \setminus B_{n}.$ \newline
We analyze $\lim_{\delta \rightarrow 0}\iint_{G\times G}\xi _{\delta }\left(
x^{\prime },x\right) dx^{\prime }dx:$ we note%
\begin{equation*}
\iint_{G\times G}\xi _{\delta }\left( x^{\prime },x\right) dx^{\prime
}dx\geq \iint_{\left( G\setminus \overline{B}_{n}\right) \times \left(
G\setminus \overline{B}_{n}\right) }\xi _{\delta }\left( x^{\prime
},x\right) dx^{\prime }dx
\end{equation*}%
which, thanks to Chacon's biting lemma, provides the estimation%
\begin{equation*}
\lim_{\delta \rightarrow 0}\iint_{G\times G}\xi _{\delta }\left( x^{\prime
},x\right) dx^{\prime }dx\geq \iint_{\left( G\setminus \overline{B}%
_{n}\right) \times \left( G\setminus \overline{B}_{n}\right) }\xi \left(
x^{\prime },x\right) dx^{\prime }dx
\end{equation*}%
Since $G\setminus \overline{B}_{n}$ is a measurable set included in $\Omega
\setminus B_{n},$ then we have the estimation 
\begin{equation*}
\iint_{\left( G\setminus \overline{B}_{n}\right) \times \left( G\setminus 
\overline{B}_{n}\right) }\xi \left( x^{\prime },x\right) dx^{\prime }dx\geq
\int_{G\setminus \overline{B}_{n}}\left\vert \nabla u\left( x\right)
\right\vert ^{p}dx,
\end{equation*}%
which implies%
\begin{equation*}
\lim_{\delta \rightarrow 0}\iint_{G\times G}\xi _{\delta }\left( x^{\prime
},x\right) dx^{\prime }dx\geq \int_{G\setminus \overline{B}_{n}}\left\vert
\nabla u\left( x\right) \right\vert ^{p}dx.
\end{equation*}%
By letting $n\rightarrow \infty $ we\ finish the proof of (\ref%
{Ponce_inequality}).

\subsection{A corollary}

Assume $h$ is a given simple function defined in $\Omega .$ Then $h$ can be
written as $h\left( x\right) =\sum_{i=1}^{m}h_{i}I_{B_{i}}\left( x\right) $,
where $\left\{ B_{i}\right\} $ is a finite covering of disjoint measurable
sets of $\Omega $ and $\left( h_{i}\right) _{i}$ is a set of numbers such
that $h_{\min }\leq h_{i}\leq h_{\max }$. Consequently, it can be easily
checked that%
\begin{equation*}
I\doteq \int_{\Omega }\int_{\Omega }H\left( x^{\prime },x\right) k_{\delta
}\left( \left\vert x^{\prime }-x\right\vert \right) \frac{\left\vert
u_{\delta }\left( x^{\prime }\right) -u_{\delta }\left( x\right) \right\vert
^{p}}{\left\vert x^{\prime }-x\right\vert ^{p}}dx^{\prime }dx\geq
\sum_{i=1}^{m}h_{i}\int_{B_{i}}\int_{B_{i}}k_{\delta }\left( \left\vert
x^{\prime }-x\right\vert \right) \frac{\left\vert u_{\delta }\left(
x^{\prime }\right) -u_{\delta }\left( x\right) \right\vert ^{p}}{\left\vert
x^{\prime }-x\right\vert ^{p}}dx^{\prime }dx.
\end{equation*}%
If we use the inequality (\ref{Ponce_inequality}) for measurable sets that
we have just proved, we straightforwardly infer 
\begin{equation*}
\lim_{\delta \rightarrow 0}I\geq \sum_{i=1}^{m}h_{i}\int_{B_{i}}\left\vert
\nabla u\left( x\right) \right\vert ^{p}dx=\int_{\Omega }h\left( x\right)
\left\vert \nabla u\left( x\right) \right\vert ^{p}dx.
\end{equation*}%
Let $h$ be a measurable function. By recalling that any measurable function $%
h$ can be pointwise approximated by $\left( s_{n}\right) _{n},$ an
increasing sequence of simple functions, we are allow to write 
\begin{align*}
& \lim_{\delta \rightarrow 0}\int_{\Omega }\int_{\Omega }H\left( x^{\prime
},x\right) k_{\delta }\left( \left\vert x^{\prime }-x\right\vert \right) 
\frac{\left\vert u_{\delta }\left( x^{\prime }\right) -u_{\delta }\left(
x\right) \right\vert ^{p}}{\left\vert x^{\prime }-x\right\vert ^{p}}%
dx^{\prime }dx \\
& =\lim_{\delta \rightarrow 0}\int_{\Omega }h\left( x\right) \int_{\Omega
}k_{\delta }\left( \left\vert x^{\prime }-x\right\vert \right) \frac{%
\left\vert u_{\delta }\left( x^{\prime }\right) -u_{\delta }\left( x\right)
\right\vert ^{p}}{\left\vert x^{\prime }-x\right\vert ^{p}}dx^{\prime }dx \\
& \geq \lim_{\delta \rightarrow 0}\int_{\Omega _{\delta }}s_{n}\left(
x\right) \int_{\Omega _{\delta }}k_{\delta }\left( \left\vert x^{\prime
}-x\right\vert \right) \frac{\left\vert u_{\delta }\left( x^{\prime }\right)
-u_{\delta }\left( x\right) \right\vert ^{p}}{\left\vert x^{\prime
}-x\right\vert ^{p}}dx^{\prime }dx \\
& \geq \int_{\Omega }s_{n}\left( x\right) \left\vert \nabla u\left( x\right)
\right\vert ^{p}dx.
\end{align*}%
It suffices to take limits in $n$ and apply the monotone convergence Theorem
to establish (\ref{1b}).

\section{A third proof}

The idea is to reproduce the arguments from \cite{ponce}. In a first step we
assume $h:\overline{\Omega }\rightarrow \left[ h_{\min },h_{\max }\right] $
is a continuous function. Moreover, without loss of generality, $h$ is
supposed to be a continuous function in the set $\Omega _{s}=\Omega \cup
\left\{ \cup _{p\in \partial \Omega }B\left( p,s\right) \right\} ,$ where $s$
is a fixed positive number.

Now, for the proof of (\ref{1b}) the key idea is to extend the Stein
inequality (see \cite[Lemma 4, p. 245]{ponce2}) in the following sense: by
using Jensen's inequality and performing a change of variables, we deduce
the inequality%
\begin{equation*}
\int_{\Omega }\int_{\Omega }H_{r}\left( x^{\prime },x\right) k_{\delta
}\left( \left\vert x^{\prime }-x\right\vert \right) \frac{\left\vert
u_{\delta }\left( x^{\prime }\right) -u_{\delta }\left( x\right) \right\vert
^{p}}{\left\vert x^{\prime }-x\right\vert ^{p}}dx^{\prime }dx\geq
\int_{\Omega _{-r}}\int_{\Omega _{-r}}H\left( x^{\prime },x\right) k_{\delta
}\left( \left\vert x^{\prime }-x\right\vert \right) \frac{\left\vert
u_{r,\delta }\left( x^{\prime }\right) -u_{r,\delta }\left( x\right)
\right\vert ^{p}}{\left\vert x^{\prime }-x\right\vert ^{p}}dx^{\prime }dx
\end{equation*}%
for any $\delta <r,$ where $u_{r,\delta }=\eta _{r}\ast u_{\delta },$ $\eta
_{r}\left( x\right) =\frac{1}{r^{N}}\eta \left( \frac{x}{r}\right) ,$ $x\in 
\mathbb{R}^{N},$ $\eta \in ,$ $\eta \ $is a nonnegative and radial function
from $C_{c}^{\infty }\left( B\left( 0,1\right) \right) $ such that $\int
\eta \left( x\right) dx=1,$ 
\begin{equation*}
H_{r}\left( x^{\prime },x\right) =\frac{\left( \eta _{r}\ast h\right) \left(
x^{\prime }\right) +\left( \eta _{r}\ast h\right) \left( x\right) }{2}
\end{equation*}%
and $\Omega _{-r}=\left\{ x\in \Omega :\limfunc{dist}(x,\partial \Omega
)>r\right\} .$ Due to the continuity of $H$ in $\Omega _{s}\times \Omega
_{s} $ we know $H_{r}\left( x^{\prime },x\right) \rightarrow H\left(
x^{\prime },x\right) $ uniformly on compact sets of $\Omega _{s}\times
\Omega _{s},$ whereby, for any $\epsilon >0,$ we can choose $r_{0}>0$ such
that%
\begin{equation*}
\left\vert \int_{\Omega }\int_{\Omega }\left( H\left( x^{\prime },x\right)
-H_{r}\left( x^{\prime },x\right) \right) k_{\delta }\left( \left\vert
x^{\prime }-x\right\vert \right) \frac{\left\vert u_{\delta }\left(
x^{\prime }\right) -u_{\delta }\left( x\right) \right\vert ^{p}}{\left\vert
x^{\prime }-x\right\vert ^{p}}dx^{\prime }dx\right\vert \leq \epsilon C\text{
}
\end{equation*}%
for any $r<r_{0}$ and uniformly in $\delta >0.$ Then 
\begin{eqnarray*}
&&\lim_{\delta \rightarrow 0}\int_{\Omega }\int_{\Omega }H\left( x^{\prime
},x\right) k_{\delta }\left( \left\vert x^{\prime }-x\right\vert \right) 
\frac{\left\vert u_{\delta }\left( x^{\prime }\right) -u_{\delta }\left(
x\right) \right\vert ^{p}}{\left\vert x^{\prime }-x\right\vert ^{p}}%
dx^{\prime }dx \\
&\geq &\lim_{\delta \rightarrow 0}\int_{\Omega _{-r}}\int_{\Omega
_{-r}}H\left( x^{\prime },x\right) k_{\delta }\left( \left\vert x^{\prime
}-x\right\vert \right) \frac{\left\vert u_{r,\delta }\left( x^{\prime
}\right) -u_{r,\delta }\left( x\right) \right\vert ^{p}}{\left\vert
x^{\prime }-x\right\vert ^{p}}dx^{\prime }dx-\epsilon C
\end{eqnarray*}%
for any $r<r_{0}.$ At this point we notice Proposition 1 from \cite[p. 242]%
{ponce2} can be modified by including the term $H\left( x^{\prime },x\right) 
$ within the integrand: this is factually what Remark \ref{nota3}
establishes. Then, if we pass to the limit in $\delta \rightarrow 0$ and we
use the convergence of $\rho _{r}\ast u_{\delta }\rightarrow \rho _{r}\ast u$
in $C^{2}\left( \overline{\Omega }_{-r}\right) ,$ we get 
\begin{equation*}
\lim_{\delta \rightarrow 0}\int_{\Omega _{-r}}\int_{\Omega _{-r}}H\left(
x^{\prime },x\right) k_{\delta }\left( \left\vert x^{\prime }-x\right\vert
\right) \frac{\left\vert u_{r,\delta }\left( x^{\prime }\right) -u_{r,\delta
}\left( x\right) \right\vert ^{p}}{\left\vert x^{\prime }-x\right\vert ^{p}}%
dx^{\prime }dx\geq \int_{\Omega _{-r}}h\left( x\right) \left\vert \nabla
\left( \rho _{r}\ast u\right) \left( x\right) \right\vert ^{p}dx^{\prime }dx.
\end{equation*}%
Consequently, by letting $r\rightarrow 0$\ in the inequality from above, and
taking into account that $\nabla \left( \rho _{r}\ast u\right) $ strongly
converges to $\nabla u$ in $L^{p}\left( \Omega \right) ,$ we derive%
\begin{equation*}
\lim_{\delta \rightarrow 0}\int_{\Omega }\int_{\Omega }H\left( x^{\prime
},x\right) k_{\delta }\left( \left\vert x^{\prime }-x\right\vert \right) 
\frac{\left\vert u_{\delta }\left( x^{\prime }\right) -u_{\delta }\left(
x\right) \right\vert ^{p}}{\left\vert x^{\prime }-x\right\vert ^{p}}%
dx^{\prime }dx\geq \int_{\Omega }h\left( x\right) \left\vert \nabla u\left(
x\right) \right\vert ^{p}dx^{\prime }dx-\epsilon C.
\end{equation*}%
Now, since $\epsilon $ is arbitrarily small, then the thesis is proved under
the\ assumption that $h$ is continuous in $\Omega _{s}$.

If $h:\Omega \rightarrow \left[ h_{\min },h_{\max }\right] $ is a measurable
function, then we extend it by zero to $\Omega _{s}$ and then we apply
Luzin's Theorem to this extended function. The remain of the details follows
along the same lines of Subsection \ref{measurable}.

\section{Acknowledgement}

This work was supported by Spanish Projects MTM2017-87912-P,
SBPLY/17/180501/000452 and by Universidad de Castilla-La Mancha Support for
Groups 2018.

\end{document}